\documentclass[12pt]{amsart}
\usepackage[utf8]{inputenc}
\usepackage{amsmath,amssymb,amsthm,colonequals,mathrsfs,mathtools}
\usepackage{array,enumitem,yfonts}
\usepackage{comment,ulem}
\usepackage[alphabetic]{amsrefs}
\usepackage[all,cmtip]{xy}
\usepackage[colorlinks,anchorcolor=blue,citecolor=blue,linkcolor=blue,urlcolor=blue,bookmarksopen=true]{hyperref}
\urlstyle{rm}
\allowdisplaybreaks

\usepackage[margin=1in]{geometry}
\usepackage{ulem}\normalem

\usepackage{tikz}
\usetikzlibrary{cd,arrows}
\tikzset{>=stealth}
\tikzcdset{arrow style=tikz}
\tikzset{link/.style={column sep=1.8cm,row sep=0.16cm}}

\AtBeginDocument{%
	\def\MR#1{}
}
\pagestyle{plain}
\setlength{\footskip}{25pt}
\usepackage{fancyhdr}
\fancypagestyle{titlepage}
{
	\fancyhf{}

	\fancyfoot[l]{
	\href{https://mathscinet.ams.org/mathscinet/msc/msc2020.html}
		{\emph{2020 Mathematics Subject Classification}}
		14G15, 14J70, 14N05.
		\\
		\emph{Keywords}: Bertini's theorem, hypersurfaces, finite fields, Frobenius classicality
	}
}

\newcommand{\bF}{\mathbb{F}}    

\newcommand{\bP}{\mathbb{P}}    


\newcommand{\Sing}{\operatorname{Sing}}

\newtheorem{thm}{Theorem}[section]
\newtheorem{prop}[thm]{Proposition}
\newtheorem{lemma}[thm]{Lemma}
\newtheorem{cor}[thm]{Corollary}
\numberwithin{equation}{section}

\theoremstyle{definition}

\newtheorem{defn}[thm]{Definition}
\newtheorem{eg}[thm]{Example}

\newtheorem{rmk}[thm]{Remark}

\begin{document}

\title{\bf Transverse linear subspaces to hypersurfaces over finite fields}
\author{Shamil Asgarli
	\and Lian Duan
	\and Kuan-Wen Lai}
\date{}

\newcommand{\ContactInfo}{{
\bigskip\footnotesize

\bigskip
\noindent S.~Asgarli,
\textsc{Department of Mathematics and Computer Science \\
Santa Clara University \\
Santa Clara, CA 95050, USA}\par\nopagebreak
\noindent\texttt{sasgarli@scu.edu}

\bigskip
\noindent L.~Duan,
\textsc{Institute of Mathematical Sciences \\
ShanghaiTech University \\
No.393 Middle Huaxia Road, Pudong New District, 
Shanghai, China}\par\nopagebreak
\noindent\texttt{duanlian@shanghaitech.edu.cn}

\bigskip
\noindent K.-W.~Lai,
\textsc{Institute of Mathematics, Academia Sinica \\
6F, Astronomy-Mathematics Building \\
No. 1, Sec. 4, Roosevelt Road, Taipei 106319, Taiwan}\par\nopagebreak
\noindent\textsc{Email:} \texttt{kwlai@gate.sinica.edu.tw}
}}

\maketitle
\thispagestyle{titlepage}

\begin{abstract} Ballico proved that a smooth projective variety $X$ of degree $d$ and dimension $m$ over a finite field of $q$ elements admits a smooth hyperplane section if $q\geq d(d-1)^{m}$. In this paper, we refine this criterion for higher codimensional linear sections on smooth hypersurfaces and for hyperplane sections on Frobenius classical hypersurfaces. We also prove a similar result for the existence of reduced hyperplane sections on reduced hypersurfaces.
\end{abstract}

\section{Introduction}
\label{sect:intro}

A classical theorem of Bertini asserts that a smooth projective variety $X\subset\bP^n$ defined over an infinite field $k$ admits a smooth hyperplane section defined over $k$. By applying this theorem repeatedly, one can obtain a linear section on $X$ of any dimension without extending the ground field $k$.

If $k=\bF_q$ is a finite field, then Bertini's theorem is no longer true in its original form as there are only finitely many hyperplanes in $\bP^n$ defined over $\bF_q$, and they could all happen to be tangent to $X$. As a concrete counterexample, see \cite{Kat99}*{Question 10}, \cite{Poo04}*{Theorem 3.1} or \cite{Asg19}*{Example 2.2}. There are two approaches to remedy this situation:
\begin{enumerate}[label=(\arabic*)]
    \item\label{Poonen-view}
    Instead of intersecting $X$ with hyperplanes, one could allow intersection with hypersurfaces of arbitrary degrees. This approach was taken by Poonen in \cite{Poo04}, where he proved the existence of a hypersurface $Y$ over the ground field such that the intersection $X\cap Y$ is smooth.
    
    \item\label{Ballico-view}
    Bertini's theorem is still valid if the cardinality of $\bF_q$ is sufficiently large with respect to $d\colonequals\deg(X)$. In this direction, Ballico \cite{Bal03} proved that if
    $$
        q\geq d(d-1)^{\dim X},
    $$
    then there exists an hyperplane $H$ over $\bF_q$ such that $X\cap H$ is smooth. Applying this result repeatedly, one can obtain a smooth linear section on $X$ over $\bF_q$ of any dimension.
\end{enumerate}

In the direction of \ref{Ballico-view}, Cafure--Matera--Privitelli \cite{CMP15}*{Corollary~6.6} and Matera--P\'erez--Privitelli \cite{MPP16}*{Theorem~3.6} extended Ballico's result to higher codimensional linear sections on possibly singular complete intersections. In the case of smooth hypersurfaces $X\subset\bP^n$ of degree $d$ over $\bF_q$, both results assert the existence of an $r$-dimensional linear subspace $L\subset\bP^n$ over $\bF_q$ such that $X\cap L$ is smooth provided that
\begin{equation}
\label{eqn:CMP-MPP-bound}
    q \geq d(d-1)^{r} + f_r(d)
\end{equation}
where $f_r(d)$ is a polynomial in $d$ of degree $r$ with coefficients depending on $r$.

In this paper, we first establish a statement similar to \cites{CMP15,MPP16} for smooth hypersurfaces via an independent approach. This new method allows us to construct inductively a flag of linear subspaces that satisfy a stronger notion of transversality. Moreover, our lower bound for $q$ is the same as \eqref{eqn:CMP-MPP-bound} except that $f_r(d)$ is replaced by a constant $\leq d$. In the following, we call an $r$-dimensional linear subspace in $\bP^n$ briefly as an \emph{$r$-plane}.

\begin{thm}
\label{mainthm:tranSp}
Let $X\subset \bP^n$ be a smooth hypersurface of degree $d$ defined over $\bF_q$ and pick any $0\leq r\leq n-1$. Suppose that
$$
    q \geq d(d-1)^{r}+\beta_r
    \quad\text{where}\quad
    \beta_r = \begin{cases}
        1\quad\text{if}\quad r\leq n-3, \\
        d\quad\text{if}\quad r=n-2, \\
        0\quad\text{if}\quad r=n-1.
    \end{cases}
$$
Then there exists a sequence of linear subspaces
$
    H_0\subset H_1\subset \dots \subset H_r
$
where each $H_i$ is an $i$-plane over $\bF_q$ that is \textbf{very transverse} to $X$ in the following sense:
\begin{itemize}
    \item $H_i$ is transverse to $X$, that is, $X\cap H_i$ is smooth, and
    \item $H_i$ is contained in a hyperplane over $\overline{\bF_q}$ that is transverse to $X$.
\end{itemize}
\end{thm}

A transverse hyperplane is automatically very transverse, so Theorem~\ref{mainthm:tranSp} recovers Ballico's result when $r = n-1$. For higher codimensions, the notion of very transversality becomes different from the usual transversality. As a simple example, consider the conic $C\subset\bP^2$ over a field of characteristic $2$ defined by the equation
\begin{equation}
\label{eqn:strange-conic}
    x^2 = yz.
\end{equation}
This is an example of a \emph{strange curve} as the point $[x:y:z] = [1:0:0]$ lies on every tangent line of $C$. Notice that this point is not on $C$, so it represents a transverse $0$-plane which is not very transverse. More discussions on very transversality and the proof of Theorem~\ref{mainthm:tranSp} will be given in Section~\ref{sect:very-tran-subsp}.

\begin{rmk}
For $d\geq 3$ and $r\geq 1$, we can improve $\beta_r=1$ to $\beta_r=0$ for $r\leq n-3$. Indeed, the quantity $d(d-1)^{r}$ is never a prime power in this case, and hence the hypothesis $q\geq d(d-1)^{r}+1$ can be relaxed to $q\geq d(d-1)^{r}$.
\end{rmk}

\begin{rmk}
When $n=3$ and $r=1$, we have $\beta_r=d$ and Theorem~\ref{mainthm:tranSp} implies that a smooth surface in $\bP^3$ admits a transverse $\bF_q$-line if $q\geq d(d-1)+d=d^2$. This result was proved using a different idea in our previous paper \cite{ADL21}*{Theorem~3.1}.
\end{rmk}

The second part of our paper focuses on a special type of hypersurfaces. Given a smooth hypersurface $X\subset\bP^n$ over $\bF_q$, we call $X$ \emph{Frobenius classical} if there exists a point $P\in X$ whose image under the $q$-th Frobenius endomorphism is outside the tangent hyperplane $T_PX$. Otherwise, we call $X$ \emph{Frobenius nonclassical}. Note that a hyperplane is Frobenius nonclassical by definition. Examples of Frobenius classical hypersurfaces include reflexive hypersurfaces \cite{ADL21}*{Theorem~4.5}. We expect such a hypersurface to have a transverse $r$-plane over $\bF_q$ provided that $q\geq\mathrm{O}(d^{\,r})$. As evidences, it is known that
\begin{itemize}
    \item a Frobenius classical curve $C\subset\bP^2$ of degree $d$ over $\bF_q$ admits a transverse $\bF_q$-line if $q\geq d-1$ \cite{Asg19Thesis}*{Theorem 3.3.1}.
    \item a Frobenius classical surface $S\subset\bP^3$ of degree $d$ over $\bF_q$ admits a transverse $\bF_q$-line when $q\geq cd$ for some constant $c>0$ \cite{ADL21}*{Theorem~0.1}.
\end{itemize}
In Section~\ref{sect:refine-Frob}, we prove this conjecture for hyperplane sections on hypersurfaces:

\begin{thm}
\label{thm:refine-Ballico}
Let $X\subset\bP^n$ be a smooth Frobenius classical hypersurface of degree $d$ defined over $\bF_q$. Suppose
$$
    q\geq c_d\cdot d(d-1)^{n-2}
    \qquad\text{where}\qquad
    c_d = \begin{cases}
    1 &\text{for}\quad d=2\\
    (3d+3)(3d-1)^{-1} &\text{for}\quad d\geq 3.
    \end{cases}
$$
Then there exists an $\bF_q$-hyperplane $H\subset\bP^n$ such that $X\cap H$ is smooth.
\end{thm}

Note that $c_d$ strictly decreases in $d$ for $d\geq 3$ starting from $c_3 = 3/2$, and $c_d\to 1$ as $d\to \infty$. In particular, the statement of Theorem~\ref{thm:refine-Ballico} still holds if $c_d$ is replaced by the constant $3/2$. Given a fixed ambient dimension $n$, this theorem improves the bound provided by Ballico's theorem by a factor of $d-1$.

In general, a Bertini type theorem concerns the existence of linear sections that inherit some nice properties such as smoothness, reducedness, irreducibility, and normality, from the ambient variety. Recall that a scheme $X$ is \emph{reduced} if $\mathcal{O}_X(U)$ contains no nonzero nilpotent element for every open subset $U\subset X$ \cite{Har77}*{II, Section~3}. Because $\bF_q$ is perfect, reducedness over the ground field $\bF_q$ and the algebraic closure $\overline{\bF_q}$ are equivalent. In particular, a hypersurface $X\subset\bP^n$ defined by a polynomial $F\in\bF_q[x_0,\dots,x
_n]$ is reduced provided that the quotient ring $\overline{\bF_q}[x_0, ..., x_n]/(F)$ contains no nonzero nilpotent element. Combining these ingredients, we obtain the criterion: \emph{a hypersurface $X = \{F = 0\}\subset\bP^n$ over $\bF_q$ is reduced if and only if in the factorization of $F$ into irreducible polynomials over $\overline{\bF_q}$
$$
    F = \prod_{i=1}^m G_i
    \;\in\;\overline{\bF_q}[x_0,\dots,x_n]
$$
all the factors $G_i$ are coprime to each other.}

Our third result concerns the existence of reduced hyperplane sections, and can be viewed as Bertini's theorem for reducedness over finite fields.

\begin{thm}
\label{main-theorem-red}
Let $X\subset \bP^n$ be a reduced hypersurface of degree $d$ over $\bF_q$. Then there exists a hyperplane $H\subset\bP^n$ over $\bF_q$ such that $X\cap H$ is reduced and has dimension $n-2$ if $q$ is greater than or equal to a constant depending only on $n$ and $d$ as given below:
\begin{center}
\setlength\extrarowheight{3pt}
\begin{tabular}{|c|c|c|c|}
    \hline
     & $n=2$ & $n=3$ & $n\geq 4$ \\[3pt]
    \hline
    $q\geq$ & $\frac{3}{2}d(d-1)$ & $d(d-1)+1$ & $d$ \\[3pt]
    \hline
\end{tabular}
\end{center}
\end{thm}

By applying this theorem repeatedly, one can deduce similar statements for higher codimensional linear sections; see Corollary~\ref{cor:reduced-r-plane}. Notice that, in the case $n=2$, the theorem asserts the existence of an $\bF_q$-line $H$ in $\bP^2$ meeting a reduced plane curve in its smooth locus transversely. Theorem~\ref{main-theorem-red} will be proved in Section~\ref{sect:reduced-hypersection}.

\subsection*{Acknowledgements}
We are very grateful to an anonymous referee for a detailed review and helpful suggestions that improved both the accuracy and clarity of our work. The first author was partially supported by NSF grant 1701659, and by a postdoctoral research fellowship from the University of British Columbia. The third author was supported by the ERC Synergy Grant HyperK (ID: 854361).

\section{Existence of very transverse linear subspaces}
\label{sect:very-tran-subsp}

Assume that one would like to prove Theorem~\ref{mainthm:tranSp} for merely transverse linear subspaces by induction on $r$. Then, given an $(r-1)$-plane $H_{r-1}\subset\bP^n$ over $\bF_q$ transverse to the smooth hypersurface $X$, one needs to find an $r$-plane $H_r\supset H_{r-1}$ also transverse to $X$. However, such an $H_r$ may not exist in general in view of the strange conic \eqref{eqn:strange-conic}. In order to remedy this situation, we run the induction process for transverse linear subspaces that satisfy additional properties:

\begin{defn}
Let $X\subset\bP^n$ be a smooth hypersurface over an arbitrary field $k$. We say a linear subspace $H\subset\bP^n$ is \emph{very transverse} to $X$ if
\begin{enumerate}[label=(\arabic*)]
    \item\label{very-trans:trans}
    it is transverse to $X$, and
    \item\label{very-trans:very}
    it is contained in a hyperplane over $\overline{k}$ that is transverse to $X$.
\end{enumerate}
\end{defn}

Property~\ref{very-trans:very} can be translated into the following equivalent form in terms of projective duality. Consider the Gauss map
$$\xymatrix{
    \gamma\colon X\ar[r] & (\bP^n)^{\ast}
    : P\ar@{|->}[r] & T_PX.
}$$
Let $X^{\ast}\colonequals\gamma(X)$ be the projective dual of $X$ and $H^{\ast}\subset(\bP^n)^{\ast}$ be the subspace consisting of the hyperplanes in $\bP^n$ that contain $H$. Then \ref{very-trans:very} is equivalent to
$$
    H^{\ast}\not\subset X^{\ast}.
$$
In characteristic zero, a transverse linear subspace is automatically very transverse due to the general version of Bertini's theorem (see \cite{Kle74}*{Corollary~5}). On the other hand, there exist odd dimensional smooth quadrics in characteristic~$2$ which admit linear subspaces that are transverse but not very transverse:

\begin{eg}
\label{eg:strange-quadric}
Suppose that $X\subset\bP^n$ is a smooth hypersurface of degree at least $2$ over an algebraically closed field. Then $X$ is \emph{strange}, meaning that its tangent hyperplanes contain a common point $P\in\bP^n$, if and only if it is an odd dimensional quadric in characteristic~$2$ \cite{KP91}*{Theorem~7}. In this case, one can express $X$ as
\begin{equation}
\label{eqn:strange-quadric}
    x_0^2 + \sum_{i=1}^m x_{2i-1}x_{2i} = 0
    \quad\text{where}\quad
    m = \frac{n}{2}
\end{equation}
and let $P$ be the point
$
    [x_0:x_1:\dots:x_n] = [1:0:\dots:0].
$
Since $P$ is not on $X$ and every hyperplane containing $P$ is tangent to $X$, it represents a $0$-plane that is transverse but not very transverse to $X$. More generally, for every even $0\leq r < n$, the $r$-plane
$$
    H\colonequals\{
        x_{r+1} = \dots = x_n = 0
    \}
    \cong\bP^r
$$
is transverse as it intersects $X$ in a strange quadric defined by \eqref{eqn:strange-quadric} with $m$ replaced by $r$. Moreover, the hyperplanes containing $H$ also contain $P$, hence they are tangent to $X$; consequently, $H$ is not very transverse.
\end{eg}

Let us first show that, in the cases $d = 1,2$, Theorem~\ref{mainthm:tranSp} follows easily from the main result of \cite{Bal03}.

\begin{prop}
\label{prop:linear-and-quadric}
Let $X\subset \bP^n$ be a hyperplane or a smooth quadric over $\bF_q$. Then there exists a sequence of linear subspaces
$
    H_0\subset H_1\subset\dots\subset H_{n-1}
$
where each $H_r$ is an $r$-plane defined over $\bF_q$ and very transverse to $X$.
\end{prop}

\begin{proof}
Ballico \cite{Bal03} proved that if
$
    q\geq d(d-1)^{\dim X}
$
then there is an $\bF_q$-hyperplane $H_{n-1}$ transverse to $X_{n-1}\colonequals X$. Note that $d(d-1)^{\dim X}$ equals $0$ when $d=1$ and equals $2$ when $d=2$, so the above inequality always holds in our situation. Therefore, we can take $X_{n-2}\colonequals X_{n-1}\cap H_{n-1}$, consider it as a hypersurface in $H_{n-1}\cong\bP^{n-1}$, and repeat the same process to find an $(n-2)$-plane $H_{n-2}\subset H_{n-1}$ over $\bF_q$ transverse to $X_{n-2}$. Thus, by induction, we have a sequence of transverse linear subspaces
$
    H_0\subset H_1\subset\cdots\subset H_{n-1}
$
such that $\dim H_r=r$. Notice that each $H_r$ in this sequence is very transverse since they are all contained in the transverse hyperplane $H_{n-1}$.
\end{proof}

\subsection{Strategy for proving Theorem~\ref{mainthm:tranSp}}
\label{subsect:very-transverse-dual}

Let $X\subset\bP^n$ be a smooth hypersurface over $\bF_q$ that admits a very transverse $(r-1)$-plane $H_{r-1}\subset\bP^n$. Among the $r$-planes over $\bF_q$ that contain $H_{r-1}$, we would like to estimate the number of bad choices, namely, the $r$-planes that are \emph{not} very transverse to $X$.

By definition, a linear subspace $H\subset\bP^n$ is not very transverse to $X$ if and only if
\begin{enumerate}[label=(\roman*)]
    \item\label{bad-plane:not-trans}
    it is not transverse to $X$, or
    \item\label{bad-plane:non-proper-dual}
    the dual subspace $H^\ast\subset(\bP^n)^\ast$ is contained in $X^\ast$.
\end{enumerate}
Our estimates for the numbers of bad $r$-planes of these two types are established using geometry of the dual hypersurface $X^\ast$. Let us deal with $r$-planes of type~\ref{bad-plane:non-proper-dual} first:

\begin{prop}
\label{prop:non-proper-dual}
Let $X\subset\bP^n$ be a smooth hypersurface of degree $d$ over a field $k$ which admits a very transverse $(r-1)$-plane $H_{r-1}$ over $k$. Then the number of $r$-planes $H$ that contain $H_{r-1}$ and satisfy $H^{\ast}\subset X^{\ast}$ is at most $d(d-1)^{n-1}$.
\end{prop}

\begin{proof}
By hypothesis, $H_{r-1}^{\ast}\cap X^{\ast}$ is a hypersurface in $H_{r-1}^{\ast}\cong\bP^{n-r}$ whose degree is equal to $\deg(X^{\ast}) \leq d(d-1)^{n-1}$ by Pl\"ucker's formula \cite{Kle86}*{Propositions 2 and 9}. For each $r$-plane $H$ containing $H_{r-1}$, its dual $H^\ast$ appears as a hyperplane in $H_{r-1}^\ast$, and $H^\ast\subset X^\ast$ means that $H^\ast$ appears as a linear component of the hypersurface $H_{r-1}^{\ast}\cap X^{\ast}$. As this hypersurface has degree at most $d(d-1)^{n-1}$, the statement follows.
\end{proof}

The estimate for the number of bad $r$-planes of type~\ref{bad-plane:not-trans} is more complicated. We will turn this into a point counting problem on a certain projective scheme, and leave the explicit computation to the next subsection.

In the following, we fix a smooth hypersurface $X\subset\bP^n$ of degree $d$ over a field $k$ and assume that it admits a very transverse $(r-1)$-plane $H_{r-1}$ over $k$. Let
$
    \gamma\colon X\longrightarrow(\bP^n)^\ast
$
be the Gauss map associated with $X$ and define
\begin{equation}
\label{eqn:gauss-preimage}
    Y_{r-1}\colonequals\gamma^{-1}(H_{r-1}^\ast).
\end{equation}

\begin{prop}
\label{prop:gauss-preimage}
The scheme $Y_{r-1}$ defined above, as a subscheme of $\bP^n$, is a complete intersection of codimension $r+1$ and multidegree $(d,d-1,\dots,d-1)$.
\end{prop}

\begin{proof}
By hypothesis, the intersection $X^\ast\cap H_{r-1}^\ast$ is a hypersurface in $H_{r-1}^\ast\cong\bP^{n-r}$, so it has dimension $n-r-1$. Since $Y_{r-1} = \gamma^{-1}(X^\ast\cap H_{r-1}^\ast)$, the finiteness of Gauss map \cite{Zak93}*{Corollary~I.2.8} implies that $Y_{r-1}$ has dimension $n-r-1$ as well. Thus it has codimension $r+1$ in $\bP^n$. Let $L_i = 0$, $i=1,\dots,r$, be the linear equations that cut out $H_{r-1}^\ast$ in $(\bP^n)^\ast$. By construction, $Y_{r-1}$ is the zero locus on $X$ of the polynomials $\gamma^\ast L_i$ for $i=1,\dots,r$. This shows that $Y_{r-1}$ is a complete intersection, and its multidegree is determined by $\deg(X) = d$ and $\deg(\gamma^\ast L_i) = d-1$ for all $i$.
\end{proof}

Let $\pi\colon\bP^n\dashrightarrow\bP^{n-r}$ be the projection from $H_{r-1}$. Note that taking a point $P\in\bP^{n-r}$ to its preimage $\pi^{-1}(P)\subset\bP^n$ defines a bijection between the following sets
\begin{equation}
\label{eqn:proj-image}
\xymatrix{
    \bP^{n-r}(k')\ar[r]^-\sim &
    \{
        r\text{-planes }H\subset\bP^n\text{ over }k'\text{ such that }H\supset H_{r-1}
    \}
}
\end{equation}
where $k'$ is any field extension over $k$. In the following, the \emph{proper image} $\pi(Y_{r-1})$ means the closure of $\pi(Y_r^\circ)$, where $Y_{r-1}^\circ\subset Y_{r-1}$ be the subset where $\pi$ is well-defined.

\begin{lemma}
\label{lem:proj-image}
Let $P\in\bP^{n-r}$ be a $k$-point outside the proper image $\pi(Y_{r-1})\subset\bP^{n-r}$ of the scheme defined in \eqref{eqn:gauss-preimage}. Then the image $r$-plane $H$ of $P$ under bijection~\eqref{eqn:proj-image} is defined over $k$ and transverse to $X$.
\end{lemma}

\begin{proof}
Assume, to the contrary, that $H$ is not transverse to $X$, namely, there exists a point $Q\in X\cap \pi^{-1}(P)$ such that $Q\in H\subset T_QX$. The inclusion relation $H_{r-1}\subset H$ implies $H_{r-1}\subset T_QX$, or equivalently, $\gamma(Q)\in H_{r-1}^\ast$, which implies $Q\in Y_{r-1}$. We claim that $Q\notin H_{r-1}$. Indeed, if $Q\in H_{r-1}$, then the same relation $H_{r-1}\subset H$ implies $Q\in H_{r-1}\subset T_QX$, whence $H_{r-1}$ is not transverse to $X$, contradiction. We conclude that $Q\in Y_{r-1}\setminus H_{r-1}$, so the projection $\pi$ is well defined at $Q$, and $P=\pi(Q)\in\pi(Y_{r-1})$, contradiction.
\end{proof}

Suppose that the ground field $k$ is finite. As a consequence of Lemma~\ref{lem:proj-image}, to find an $r$-plane $H\supset H_{r-1}$ over $k$ which is transverse to $X$, it is sufficient to show that the $k$-points on $\bP^{n-r}$ are strictly more than the $k$-points on $\pi(Y_{r-1})$.

\subsection{Proof of Theorem~\ref{mainthm:tranSp}}
\label{subsect:degree>=3}

In the following, we will compute an estimate for the number of bad $r$-planes of type~\ref{bad-plane:not-trans} and then conclude the proof of Theorem~\ref{mainthm:tranSp}. Some technical lemmas needed in the process will be postponed to the next subsection.

\begin{defn}
For every integer $m\geq 0$ and $q$ a power of a prime number, we define
$$
    \theta_m(q)\colonequals
    \#\bP^m(\bF_q)
    = \frac{q^{m+1}-1}{q-1}
$$
and set $\theta_m(q) = 0$ for $m<0$. We will write $\theta_m(q)$ briefly as $\theta_m$ if there is no ambiguity about $q$ from the context.
\end{defn}

\begin{lemma}
\label{lem:couvreur-improved}
Let $X\subset\bP^n$ be a reduced subscheme over $\bF_q$ and write
$
    X = \bigcup_{i=1}^t X_i
$
where $X_i$ is an irreducible component of dimension $m_i<n$ and degree $d_i$. Denote $d\colonequals\sum_{i=1}^{t} d_i$ and $m\colonequals\max(m_1, ..., m_t)$. Then
$$
    \#X(\bF_q)
    \leq d(\theta_m - \theta_{2m-n}) + \theta_{2m-n}.
$$
\end{lemma}

\begin{proof}
We use the inequality from \cite{Cou16}*{Theorem 3.1}, which states
$$
    \# X(\bF_q)
    \leq\sum_{i=1}^{t} d_i(\theta_{m_i} - \theta_{2m_i-n})+ \theta_{2m-n}.
$$
Based on this, it is sufficient to verify that
\begin{align}
\label{eq:couvreur}
    \theta_{m_i}-\theta_{2m_i-n}\leq \theta_{m}-\theta_{2m-n}
    \qquad\text{for each}\qquad
    i\in\{1, \dots, t\}
\end{align}
Let us proceed by three cases:

\begin{description}[leftmargin=0cm]
\item[Case~($2m-n<0$)] Then $2m_i-n<0$ and \eqref{eq:couvreur} reduces to $\theta_{m_i}\leq \theta_{m}$ which clearly holds.

\smallskip
\item[Case~($2m-n\geq 0$ and $2m_i-n\geq 0$)]
Rearrange \eqref{eq:couvreur} as $\theta_{2m-n}-\theta_{2m_{i}-n}\leq \theta_{m}-\theta_{m_i}$, which is the same as
$$
    \frac{q^{2m}-q^{2m_i}}{q^n} \leq q^{m}-q^{m_i},
    \qquad\text{or equivalently,}\qquad
    q^{m} + q^{m_i}\leq q^n.
$$
The last inequality holds since $q^{m}+q^{m_i}\leq q^{n-1}+q^{n-1}=2q^{n-1}\leq q^{n}$.

\smallskip
\item[Case~($2m-n\geq 0$ and $2m_i-n < 0$)]
Then \eqref{eq:couvreur} reduces to $\theta_{m_i}+\theta_{2m-n}\leq \theta_{m}$, that is,
$$
    q^{m_i+1} + q^{2m-n+1} \leq q^{m+1}+1.
$$
The hypothesis implies $2m-n > 2m_i-n$ and thus $m\geq m_i+1$. We also have $n\geq m+1$, hence $m\geq 2m-n+1$. Therefore,
$$
    q^{m+1}+1
    \geq q^{m}+q^{m}+1
    \geq q^{m_i+1} + q^{2m-n+1} + 1
    > q^{m_i+1} + q^{2m-n+1}
$$
as desired.
\qedhere
\end{description}
\end{proof}

\begin{lemma}
\label{lem:inductive-step}
Let $X\subset\bP^n$ be a smooth hypersurface of degree $d\geq 2$ over $\bF_q$. Assume that $1\leq r\leq n-1$ and that $X$ admits a very transverse $(r-1)$-plane $H_{r-1}\subset\bP^n$ over $\bF_q$. Then $X$ admits a very transverse $r$-plane $H_{r}\supset H_{r-1}$ over $\bF_q$ provided that
$$
    q \geq d(d-1)^{r}+\beta_r
    \qquad\text{where}\qquad
    \beta_r = \begin{cases}
        1\quad\text{if}\quad r\leq n-3, \\
        d\quad\text{if}\quad r=n-2, \\
        0\quad\text{if}\quad r=n-1.
    \end{cases}
$$
\end{lemma}

\begin{proof}
Let $\gamma\colon X\longrightarrow(\bP^n)^{\ast}$ be the Gauss map associated with $X$. By Proposition~\ref{prop:gauss-preimage}, the preimage
$
    Y_{r-1}\colonequals\gamma^{-1}(H_{r-1}^\ast)
$
is a complete intersection in $\bP^n$ of dimension $n-r-1$ and of degree $d(d-1)^r$. Let $\pi\colon\bP^n\dashrightarrow\bP^{n-r}$ be the projection from $H_{r-1}$ and consider the proper image $\pi(Y_{r-1})\subset\bP^{n-r}$. If we write
$
    \pi(Y_{r-1}) = \bigcup_{i=1}^tY'_i,
$
where $Y'_i$ is an irreducible component of dimension $m_i$ and degree $d_i$, then
$$
    m\colonequals\max(m_1,\dots,m_t)\leq n-r-1
    \qquad\text{and}\qquad
    \sum_{i=1}^td_i\leq d(d-1)^r.
$$
It follows from Lemma~\ref{lem:couvreur-improved} that
$$
    \#\pi(Y_{r-1})(\bF_q)
    \leq d(d-1)^{r}\left(
        \theta_{m} - \theta_{2m-n}
    \right) + \theta_{2m-n}.
$$

According to Lemma~\ref{lem:proj-image}, the $\bF_q$-points in $\bP^{n-r}$ outside $\pi(Y_{r-1})$ one-to-one correspond to the $r$-planes containing $H_{r-1}$ over $\bF_q$ that are transverse to $X$. On the other hand, there are at most $d(d-1)^{n-1}$ many $r$-planes $H_r\supset H_{r-1}$ that satisfy $H_r^\ast\subset X^\ast$ by Proposition~\ref{prop:non-proper-dual}. Also recall that the transverse hyperplanes are automatically very transverse. As a consequence, there exists $H_r$ as in the statement provided that
\begin{equation}
\label{eqn:key-inequality}
    \theta_{n-r} > 
    d(d-1)^{r}\left(
        \theta_{m} - \theta_{2m-n}
    \right) + \theta_{2m-n} + \delta_r
\end{equation}
where $\delta_r = d(d-1)^{n-1}$ if $r\leq n-2$ and $\delta_r = 0$ if $r = n-1$. The proof of \eqref{eqn:key-inequality} is purely numerical and will be established via Lemmas~\ref{lem:delta-beta} and \ref{lem:key-inequality}.
\end{proof}

\begin{proof}[Proof of Theorem~\ref{mainthm:tranSp}]
The cases $d=1,2$ are already proved in Proposition~\ref{prop:linear-and-quadric}, so we assume $d\geq 3$ in the following. To prove the theorem, we will establish the existence of linear subspaces $H_0\subset\cdots\subset H_s$, where each $H_i$ is a very transverse $i$-plane over $\bF_q$, for all $0\leq s\leq r$ by induction on $s$. 

In the initial case $s = 0$, we need to find a point $P\in\bP^n(\bF_q)\setminus X(\bF_q)$. By the Homma--Kim bound \cite{HK13_bound}*{Theorem~1.2},
$$
    \#X(\bF_q)\leq
    (d-1)q^{n-1} + dq^{n-2} + \theta_{n-3}.
$$
It is sufficient to prove that
$$
    (d-1)q^{n-1} + dq^{n-2} + \theta_{n-3}
    < \theta_{n}.
$$
A straightforward computation reduces the last inequality to $d-1 < q$, which follows from our hypothesis since
$$
    q\geq d(d-1)^r+\beta_r \geq d > d-1.
$$
Hence there exists a point $P\in\bP^n(\bF_q)\setminus X(\bF_q)$. Note that $P$ is non-strange, namely, is contained in a transverse hyperplane over $\overline{\bF_q}$ because of $d\geq 3$. Therefore, $P$ is a $0$-dimensional linear subspace very transverse to $X$.

Before entering the inductive step, let us prove that
\begin{equation}
\label{eqn:ineq_induction}
    d(d-1)^\ell + \beta_\ell
    \;\geq\;
    d(d-1)^{\ell-1}+\beta_{\ell-1}
    \quad\text{for all}\quad
    1\leq\ell\leq r.
\end{equation}
This implication holds for all $\ell\leq n-3$ because $\beta_\ell = \beta_{\ell-1} = 1$ in these cases. It also holds for $\ell = n-2$ because $\beta_{n-2} = d > 1 = \beta_{n-3}$. When $\ell = n-1$, the desired inequality
$$
    d(d-1)^{n-1}
    \;\geq\;
    d(d-1)^{n-2}+d
$$
can be derived easily from the assumption $d\geq 3$. As a consequence, combining \eqref{eqn:ineq_induction} with the hypothesis $q\geq d(d-1)^r + \beta_r$ gives us
\begin{equation}
\label{eqn:ineq_hypo-s}
    q\geq d(d-1)^s + \beta_s
    \quad\text{for all}\quad
    0\leq s\leq r.
\end{equation}

Now assume that there exists a sequence of linear subspaces $H_0\subset\cdots\subset H_{s-1}$ for some $s\in\{1,\dots,r\}$ where each $H_i$ is a very transverse $i$-plane over $\bF_q$. Then Lemma~\ref{lem:inductive-step}, together with \eqref{eqn:ineq_hypo-s}, implies that there exists a very transverse $s$-plane $H_s\supset H_{s-1}$ over $\bF_q$. This establishes the inductive step and thus finishes the proof.
\end{proof}

\subsection{Some numerical lemmas}
\label{subsect:numerical_lemma}

Here we establish inequality~\eqref{eqn:key-inequality} which is needed in the proof of Lemma~\ref{lem:inductive-step}.

\begin{lemma}
\label{lem:delta-beta}
Let $n$, $r$, $d$ be positive integers that satisfy $1\leq r\leq n-1$ and $d\geq 2$. Define
$$
    \delta_r\colonequals\begin{cases} 
        d(d-1)^{n-1} &\text{if}\quad r\leq n-2 \\ 
        0 &\text{if}\quad r=n-1 \\ 
    \end{cases}
    \qquad\text{and}\qquad
    \beta_r\colonequals\begin{cases}
        1\quad\text{if}\quad r\leq n-3 \\
        d\quad\text{if}\quad r=n-2 \\
        0\quad\text{if}\quad r=n-1
    \end{cases}
$$
Then, for every integer $q$, the inequality
$
    q \geq d(d-1)^{r}+\beta_r
$
implies
$
    (q-1)q^{-n+r}\delta_r\leq\beta_r.
$
\end{lemma}

\begin{proof}
The implication is obvious when  $r=n-1$ since $\delta_r = \beta_r = 0$ in this case. Assume $r\leq n-2$. Using $q\geq d(d-1)^r+\beta_r$, we obtain
\begin{equation}
\label{eqn:delta-beta}
\begin{aligned}
    (q-1)q^{-n+r}\delta_r
    &< q\cdot q^{-n+r}\delta_r
    = \frac{\delta_r}{q^{n-r-1}}
    \leq\frac{\delta_r}{(d(d-1)^r + \beta_r)^{n-r-1}}\\
    &\leq\frac{\delta_r}{(d(d-1)^r)^{n-r-1}}
    = \frac{d(d-1)^{n-1}}{(d(d-1)^r)^{n-r-1}}.
\end{aligned}
\end{equation}
When $r = n-2$, the last term equals $d-1< d = \beta_r$, so the statement holds. Assume $r\leq n-3$. Together with $r\geq 1$, we obtain $n\geq r+3\geq r + 2 + 1/r$. It follows that
\begin{equation}
\label{eqn:1<=r<=n-3}
    n-r-1\geq 1+\frac{1}{r},
    \qquad\text{or equivalently,}\qquad
    r(n-r-1)\geq r+1.
\end{equation}
The last term of \eqref{eqn:delta-beta} can be rewritten as
$$
    \frac{d(d-1)^{n-1}}{d\cdot d^{n-r-2} (d-1)^{r(n-r-1)}}
    <\frac{d(d-1)^{n-1}}{ d\cdot (d-1)^{n-r-2} (d-1)^{r(n-r-1)}}
    = \frac{(d-1)^{r+1}}{(d-1)^{r(n-r-1)}}.
$$
Then \eqref{eqn:1<=r<=n-3} implies that the last term is at most $1 = \beta_r$. This establishes the statement.
\end{proof}

\begin{lemma}
\label{lem:key-inequality}
Retain the hypothesis from Lemma~\ref{lem:delta-beta} and assume that $q$ is a power of a prime number. Let $m$ be an integer that satisfies $0\leq m\leq n-r-1$. Then
$$
    \theta_{n-r}(q) > 
    d(d-1)^{r}\left(
        \theta_{m}(q) - \theta_{2m-n}(q)
    \right) + \theta_{2m-n}(q) + \delta_r.
$$
\end{lemma}

\begin{proof}
We proceed by two cases:

\begin{description}[leftmargin=0cm]

\item[Case~($2m-n\geq 0$)]
In this case, the desired inequality is
$$
    \frac{q^{n-r+1}-1}{q-1}
    > d(d-1)^{r}\left(
        \frac{q^{m+1}-1}{q-1}
        - \frac{q^{2m-n+1}-1}{q-1}
    \right)
    + \frac{q^{2m-n+1}-1}{q-1} + \delta_r.
$$
Multiplying both sides by $q^{-n+r}(q-1)$ and rearranging, we obtain
\begin{equation}
\label{eqn:equivalent-form}
    q > d(d-1)^{r}\left(
        q^{m-n+r+1} - q^{2m-2n+r+1}
    \right) + q^{2m-2n+r+1} + (q-1)q^{-n+r}\delta_r.
\end{equation}
Consider the right hand side as a function $g(m)$ in $m$. Taking derivative gives
$$
    g'(m)=d(d-1)^{r}\left(
        q^{m-n+r+1}\ln(q) - 2q^{2m-2n+r+1}\ln(q)
    \right) + 2q^{2m-2n+r+1}\ln(q).
$$
The assumption $m\leq n-r-1$ implies $n\geq m+r+1 > m+1$. It follows that
$$
    m-n+r = m+(n-2n)+r > 2m-2n+r+1.
$$
Hence $q^{m-n+r+1}\geq 2q^{m-n+r} > 2q^{2m-2n+r+1}$, which implies $g'(m)>0$, so $g(m)$ is increasing in $m$. As a consequence, it is sufficient to prove \eqref{eqn:equivalent-form} when $m$ attains the maximal possible value $n-r-1$, that is,
$$
    q > d(d-1)^{r}\left(
        1-q^{-r-1}
    \right) + q^{-r-1} + (q-1)q^{-n+r}\delta_r.
$$
We establish this via the following inequalities:
\begin{align*}
    q \geq d(d-1)^{r} + \beta_r
    &> d(d-1)^{r} + q^{-r-1}\underbrace{\left(1-d(d-1)^{r}\right)}_{<0} + \beta_r \\
    &\geq d(d-1)^{r} - q^{-r-1}d(d-1)^{r}+q^{-r-1} + (q-1) q^{-n+r} \delta_r
    \tag{by Lemma~\ref{lem:delta-beta}}\\
    &= d(d-1)^{r}\left(1-q^{-r-1}\right)+q^{-r-1} + (q-1)q^{-n+r}\delta_r.
\end{align*}

\medskip
\item[Case~($2m-n < 0$)]
In this case, the desired inequality reduces to:
$$
    \frac{q^{n-r+1}-1}{q-1}
    > d(d-1)^{r}\cdot\left(
        \frac{q^{m+1}-1}{q-1}
    \right) + \delta_r.
$$
Multiplying both sides by $q^{-n+r}(q-1)$ and rearranging the terms gives
$$
    q > d(d-1)^{r}q^{m+1-n+r}
    + (1 - d(d-1)^{r})q^{-n+r}
    + (q-1)q^{-n+r}\delta_r.
$$
Notice that $m\leq n-r-1$ implies $m+1-n+r\leq 0$. Then the above inequality follows from
\begin{align*}
    q\geq d(d-1)^{r} + \beta_r
    &\geq  d(d-1)^{r}q^{m+1-n+r} + \beta_r \\
    &> d(d-1)^{r}q^{m+1-n+r} + \underbrace{(1 - d(d-1)^r)}_{<0}q^{-n+r} + (q-1)q^{-n+r} \delta_r.
    \tag{by Lemma~\ref{lem:delta-beta}}
\end{align*}
\end{description}
This completes the proof.
\end{proof}

\section{Smooth sections on Frobenius classical hypersurfaces}
\label{sect:refine-Frob}

We prove Theorem~\ref{thm:refine-Ballico} in this section. In order to prove this result, we need to estimate the number of hyperplanes over the ground field which are tangent to a smooth hypersurface $X$. Our main strategy is to turn counting such hyperplanes into counting points on a certain $0$-dimensional subscheme of $X$.

\subsection{Tangent hyperplanes over the ground field}
\label{subsect:ground_tangent}

Let $X\subset\bP^n$ be a hypersurface defined over $\overline{\bF_q}$, where $q$ is a fixed prime power, and let
$
    F = F(x_0,\dots,x_n)
$
be the defining polynomial of $X$. Consider the $2\times(n+1)$ matrix
$$
    M\colonequals
    \begin{pmatrix}
        F_0 & \cdots & F_n \\
        F_0^q & \cdots & F_n^q
    \end{pmatrix}
    \qquad\text{where}\qquad
    F_i\colonequals\frac{\partial F}{\partial x_i}.
$$
For each $(i,j)$ such that $0\leq i<j\leq n$, the maximal minor given by the $i$-th and $j$-th columns of this matrix determines a hypersurface
$$
    D_{ij}\colonequals\{
        F_iF_j^q - F_i^qF_j = 0
    \}\subset\bP^n
$$
of degree $(d-1)(q+1)$ over $\overline{\mathbb{F}_q}$. Let us define
$$
    Z_X\colonequals X\cap\bigcap_{0\leq i<j\leq n} D_{ij}.
$$

\begin{prop}
\label{prop:singularOrGroundTangent}
Let $X\subset\bP^n$ be a hypersurface over $\overline{\bF_q}$. Then a point $P\in X(\overline{\bF_q})$ belongs to $Z_X$ if and only if $P$ is a singular point of $X$ or $T_PX$ is defined over $\bF_q$.
\end{prop}

\begin{proof}
Observe that $P\in Z_X$ if and only if the matrix $M$ has rank equal to $0$ or $1$ when evaluated at $P$, which happens if and only if
$$
    (F_0(P),\dots, F_n(P)) = (0,\dots,0)
    \quad\text{or}\quad
    [F_0(P):\dots: F_n(P)] \in \bP^n(\bF_q),
$$
and thus correspond to the two conditions in the statement.
\end{proof}

\begin{rmk}
Here is a more geometric way to view the locus $Z_X\subset X$. Let $\{y_0,\dots,y_n\}$ be a system of homogeneous coordinates for $(\bP^n)^{\ast}$. Then $D_{ij}$ is the pullback of the hypersurface
$
    \{y_iy_j^q - y_i^qy_j = 0\}\subset(\bP^n)^{\ast}
$
under the rational map
$$
    \widetilde{\gamma}\colon\bP^n\dashrightarrow (\bP^n)^{\ast}
    :[x_0:\cdots:x_n]\mapsto [F_0:\cdots:F_n]
$$
which is obtained by extending the Gauss map of $X$ to the ambient $\bP^n$. Because the intersection
$
    \cap_{i<j}\{y_iy_j^q - y_i^qy_j = 0\}
$
defines the collection of $\bF_q$-points on $(\bP^n)^\ast$, the locus $Z_X\subset X$ consists of $P\in X$ such that $T_PX$ is defined over $\bF_q$. Note that this includes the case $T_PX = \bP^n$, that is, $P$ is singular.
\end{rmk}

\begin{cor}
\label{cor:Z_X_smoothX}
Let $X\subset\bP^n$ be a smooth hypersurface over $\overline{\bF_q}$. Then $Z_X\subset X$ is a zero-dimensional subscheme which consists of $P\in X$ such that $T_PX$ is defined over $\bF_q$. Furthermore, the number of hyperplanes over $\bF_q$ tangent to $X$ is bounded by $\#Z_X(\overline{\bF_q})$.
\end{cor}

\begin{proof}
Since $X$ is smooth, its Gauss map is finite \cite{Zak93}*{Corollary~I.2.8}, which implies that $Z_X$ has dimension zero. By Proposition~\ref{prop:singularOrGroundTangent} and the smoothness, $Z_X$ consists of $P\in X$ such that $T_PX$ is defined over $\bF_q$. There exists a surjective map from $Z_X(\overline{\bF_q})$ to the set of $\bF_q$-hyperplanes tangent to $X$ which sends $P$ to $T_PX$, which proves the last assertion. 
\end{proof}

\subsection{Interplay with Frobenius classicality}
\label{subsect:ZX-and-X10}

Let $X\subset\bP^n$ be a smooth hypersurface over $\bF_q$ with defining polynomial $F = F(x_0,\dots,x_n)$. Consider the hypersurface
$$
    X_{1,0}\colonequals
    \left\{
        \sum_{i=0}^nx_i^q\frac{\partial F}{\partial x_i} = 0
    \right\}\subset \bP^n.
$$
If we let $\Phi\colon\bP^n\longrightarrow\bP^n$ denote $q$-th Frobenius endomorphism, then
$$
    (X\cap X_{1,0})(\overline{\bF_q}) = \left\{
        P\in X(\overline{\bF_q})
        \;\middle|\;
        \Phi(P)\in T_PX
    \right\}.
$$
In particular, $X$ is Frobenius classical if and only if $X_{1,0}$ does not contain $X$, that is, $X\cap X_{1, 0}$ has dimension $n-2$. On the other hand, Corollary~\ref{cor:Z_X_smoothX} asserts that $Z_X$ consists of $P\in X$ such that $\Phi(T_PX) = T_PX$, whence $Z_X\subset X\cap X_{1,0}$. We will use this relation to bound the number $\#Z_X(\overline{\bF_q})$, which will then provide a bound for the number of $\bF_q$-hyperplanes tangent to $X$.

\begin{lemma}
\label{lem:irreducible-intersect-with-Dij} 
Let $X$ be a smooth hypersurface over $\bF_q$. Suppose that $Y\subset X$ is a subscheme which is irreducible over $\bF_q$ and of dimension at least $1$. Then there exists $D_{ij}$ such that $\dim(Y\cap D_{ij}) = \dim(Y)-1$
\end{lemma}

\begin{proof}
Assume, to the contrary, that $\dim(Y\cap D_{ij}) = \dim(Y)$ for all $i<j$. Because each $D_{ij}$ is defined over $\bF_q$, the irreducibility of $Y$ over $\bF_q$ implies that $Y\subset D_{ij}$. We conclude that $Y$ is contained in $X\cap \bigcap_{i<j} D_{ij} = Z_X$, which is impossible as $\dim(Y)\geq 1$ and $\dim(Z_X) = 0$.
\end{proof}

\begin{prop}
\label{prop:intersect-with-Dij}
Let $X\subset\bP^n$ be a smooth hypersurface of degree $d$ over $\bF_q$. Suppose that $Y\subset X$ is an equidimensional subscheme over $\bF_q$ of dimension at least $1$. Then $Y\cap Z_X$ consists of at most $\deg(Y)[(d-1)(q+1)]^{\dim(Y)}$ many $\overline{\bF_q}$-points counted with multiplicity.
\end{prop}

\begin{proof}
Let us proceed by induction on $\dim(Y)$. We first establish the inductive step. Assume that the statement holds for any equidimensional subscheme of $X$ over $\bF_q$ of dimension $\ell$ for all $1\leq\ell<\dim(Y)$. If $Y$ is irreducible over $\bF_q$, then Lemma~\ref{lem:irreducible-intersect-with-Dij} shows that there exists $D_{ij}$ such that $\dim(Y\cap D_{ij})=\dim(Y)-1$. By the induction hypothesis, the intersection $(Y\cap D_{ij})\cap Z_X = Y\cap Z_X$ consists of at most
$$
    \deg(Y\cap D_{ij})[(d-1)(q+1)]^{\dim(Y)-1}
    = \deg(Y)[(d-1)(q+1)]^{\dim(Y)}
$$
many $\overline{\bF_q}$-points counted with multiplicity. If $Y$ is not irreducible over $\bF_q$, then we can express $Y = \bigcup_{s=1}^m Y_s$ where each $Y_s$ is a component irreducible over $\bF_q$ and $m\geq 2$. By applying the above result to each $Y_s$, we conclude that $Y\cap Z_X$ consists of at most
$$
    \sum_{s=1}^{m} \deg(Y_s)[(d-1)(q+1)]^{\dim(Y_s)}
    = \deg(Y)[(d-1)(q+1)]^{\dim(Y)}
$$
many $\overline{\bF_q}$-points counted with multiplicity.

The argument for the initial case $\dim(Y) = 1$ is almost the same as the inductive step. The only difference is that the induction hypothesis has to be replaced by B\'{e}zout's theorem in order to conclude that $Y\cap D_{ij}$, and thus $Y\cap Z_X$, consists of at most
$$
    \deg(Y)\deg(D_{ij}) = \deg(Y)(d-1)(q+1)
$$
many $\overline{\bF_q}$-points counted with multiplicity.
\end{proof}

\begin{cor}
\label{cor:frob-classical_ZX}
Let $X\subset\bP^n$ be a smooth Frobenius classical hypersurface of degree~$d$ over~$\bF_q$ with $n\geq 3$. Then $Z_X$ consists of at most
$$
    d(q+d-1)[(d-1)(q+1)]^{n-2}
$$
many $\overline{\bF_q}$-points counted with multiplicity. In particular, the number of $\bF_q$-hyperplanes tangent to $X$ is bounded by the number above.
\end{cor}

\begin{proof}
By applying Proposition~\ref{prop:intersect-with-Dij} to $Y=X\cap X_{1,0}$ which has dimension at least $1$ since $n\geq 3$. Since $Y\cap Z_X = (X\cap X_{1,0})\cap Z_X = Z_X$, we conclude that $Z_X$ has at most
$$
    \deg(X)\deg(X_{1,0})[(d-1)(q+1)]^{n-2}
    = d(q+d-1)[(d-1)(q+1)]^{n-2}
$$
many $\overline{\bF_q}$-points counted with multiplicity. The last assertion follows from Corollary~\ref{cor:Z_X_smoothX}.
\end{proof}

\subsection{Refinement of Ballico's result}
\label{subsect:refine_Ballico}

Let us finish the proof of Theorem~\ref{thm:refine-Ballico}.

\begin{lemma}
\label{lem:exp-poly-n}
Let $n\geq 2$, $d\geq 3$ be integers and define $c_d\colonequals(3d+3)(3d-1)^{-1}$. Then
$$
    c_d\cdot d(d-1)^{n-2}
    \geq 3d(n-2).
$$
\end{lemma}

\begin{proof}
For $d=3$, we have $c_3 = 3/2$, hence
$
    c_3 \cdot d(d-1)^{n-2}
    = (3/2)\cdot d\cdot 2^{n-2}
    \geq 3d(n-2)
$
where the last inequality uses $2^{n-2}\geq 2(n-2)$. For $d\geq 4$, we have $c_d > 1$. Using the fact that $3^{n-2}\geq 3(n-2)$, we obtain
$
    c_d\cdot d(d-1)^{n-2}
    > d\cdot 3^{n-2}
    \geq 3d(n-2).
$
\end{proof}

\begin{proof}[Proof of Theorem~\ref{thm:refine-Ballico}]
The case $d=2$ follows from Ballico's theorem (see also Proposition~\ref{prop:linear-and-quadric}), so we assume $d\geq 3$ in the remaining part of the proof. When $n=2$, the conclusion follows from \cite{Asg19Thesis}*{Theorem 3.3.1} which proved existence of a transverse $\bF_q$-line assuming $q\geq d-1$, so we may assume $n\geq 3$. By Corollary~\ref{cor:frob-classical_ZX}, there exists an $\bF_q$-hyperplane transverse to $X$ if
$$
    q^{n}+q^{n-1} + \cdots + 1
    > d(d+q-1)[(d-1)(q+1)]^{n-2}.
$$
It suffices to show that
$$
    q^{n} \geq d(d+q-1)[(d-1)(q+1)]^{n-2}.
$$
Since $q\geq c_d\cdot d(d-1)^{n-2}$, it is enough to show that
$$
    q^{n-1}\cdot c_d  \geq (d+q-1)(q+1)^{n-2},
$$
or equivalently,
\begin{equation}
\label{ineq:refined-ballico-stronger}
\left(1-\frac{1}{q+1}\right)^{n-2}\cdot c_d \geq \frac{q+d-1}{q}.
\end{equation}

Our hypothesis and Lemma~\ref{lem:exp-poly-n} imply $q\geq 3d(n-2)$, thus $\frac{1}{3d} > \frac{n-2}{q+1}$. Therefore,
\begin{equation}
\label{eqn:exp-poly-n_with-q}
    \frac{1}{1-\frac{1}{3d}}>\frac{1}{1-\frac{n-2}{q+1}}.   
\end{equation}
Using Bernoulli's inequality \cite{MP93}, which asserts that $(1+x)^{\ell} \geq 1+\ell x$ for all integer $\ell\geq 0$ and real number $x\geq -1$, we obtain:
\begin{align*}
    \left(1-\frac{1}{q+1}\right)^{n-2} c_d
    &\geq \left(1-\frac{n-2}{q+1}\right) c_d = \left(1-\frac{n-2}{q+1}\right) \frac{3d+3}{3d-1}  \\
    &= \left(1-\frac{n-2}{q+1}\right) \left(1+\frac{1}{d}\right)\left(\frac{1}{1-\frac{1}{3d}}\right) \\
    &> \left(1-\frac{n-2}{q+1}\right) \left(1+\frac{1}{d}\right)\left(\frac{1}{1-\frac{n-2}{q+1}}\right)
    \tag{by \eqref{eqn:exp-poly-n_with-q}} \\
    &=1+\frac{1}{d} \geq 1+\frac{1}{d(d-1)^{n-3}} \\ 
    &> 1+\frac{1}{q/(d-1)} = 1+\frac{d-1}{q} = \frac{q+d-1}{q}.
    \tag{since $q> d(d-1)^{n-2}$}
\end{align*}
This proves the desired inequality~\eqref{ineq:refined-ballico-stronger}, and completes the proof of the theorem.
\end{proof}

\section{Bertini theorems for reduced hypersurfaces}
\label{sect:reduced-hypersection}

In this section, we prove Theorem~\ref{main-theorem-red} by splitting the task into two parts. The first result handles the case $n\geq 3$.

\begin{thm}
\label{thm:reduced-hyperplane}
Let $X\subset \bP^n$ be a reduced hypersurface of degree $d\geq 2$ over $\bF_q$ where $n\geq 3$. Then there exists a hyperplane $H\subset\bP^n$ over $\bF_q$ such that $X\cap H$ is reduced and has dimension $n-2$ provided that
\begin{itemize}
    \item $q\geq d(d-1)+1$ when $n=3$,
    \item $q\geq d$ when $n\geq 4$.
\end{itemize}
\end{thm}

The next result handles the case $n=2$.

\begin{thm}
\label{thm:tranLine_reducedPlaneCurve}
Let $C\subset \bP^2$ be a reduced curve of degree $d\geq 2$ over $\bF_q$. Suppose that
$$
    q\geq\frac{3}{2}d(d-1).
$$
Then there exists an $\bF_q$-line $L\subset\bP^2$ which is transverse to $C$.
\end{thm}

\subsection{Existence of reduced hyperplane sections}
\label{subsect:existence_reducedHyper}

The next lemma uses the scheme $Z_X$ defined previously in Section~\ref{subsect:ground_tangent}. Since $X$ is not necessarily smooth, it is possible that $\dim(Z_X)\geq 1$.

\begin{lemma}
\label{lemma:not_in_D}
Let $X\subset\bP^n$ be a reduced hypersurface and $X'\subset X_{\overline{\bF_q}}$ be an irreducible component of degree at least $2$. Then $X'\not\subset Z_X$. In particular, there exists $D_{ij}$ such that $X'\cap D_{ij}$ has dimension $n-2$.
\end{lemma}

\begin{proof}
Assume, to the contrary, that $X'\subset Z_X$. Then, for each $P\in X'$, Proposition~\ref{prop:singularOrGroundTangent} implies that either $P$ is a singular point or $T_PX$ is defined over $\bF_q$. Therefore, $X'$ is contained in the union of the singular locus $\Sing(X)$ and all the hyperplanes over $\bF_q$. It follows that $X'\subset\Sing(X)$ because $X'$ is irreducible of degree at least $2$. However, $X$ being reduced implies that $\Sing(X)$ has dimension at most $n-2$, which leads to a contradiction as $\dim(X') = n-1$. We conclude that $X'\not\subset Z_X$, and the last statement follows as $X'$ is irreducible.
\end{proof}

Let $X\subset\bP^n$ be a reduced hypersurface of degree $d$ over $\bF_q$. Write 
$
    X_{\overline{\bF_q}} = \cup_{i=1}^\ell X_i
$
where each $X_i$ is irreducible and set $d_i\colonequals\deg(X_i)$. After rearranging the indices, we may assume there exists $0\leq t\leq \ell$ such that $d_i = 1$ for $1\leq i\leq t$ and $d_i > 1$ otherwise. To find a reduced hyperplane section on $X$ over $\bF_q$, we need to estimate the number of \emph{bad} hyperplanes, namely, the hyperplanes $H$ over $\bF_q$ which is a component of $X$, or intersects $X$ properly with the intersection $X\cap H$ being non-reduced. For such an $H$, let us consider the following three mutually exclusive conditions:
\begin{enumerate}[label=(\Roman*)]
    \item\label{dim=n-1}
    $H$ is a linear component of $X$, so that $H = X_i$ for some $1\leq i\leq t$.
\end{enumerate}

In the next two conditions, we assume that $H$ intersects $X$ properly such that $X\cap H$ is non-reduced. If $X$ is defined by the polynomial $F$, the properness implies that $X\cap H$ is a hypersurface in $H\cong\bP^{n-1}$ defined by $F|_H = 0$. Consider the factorization of $F|_H$ over the algebraic closure:
\begin{equation}
\label{eqn:factor_FH}
    F|_H = \prod_{j=1}^mG_j^{m_j}
\end{equation}
where the factors $G_j$ are irreducible over $\overline{\bF_q}$ and coprime to each other. As explained before Theorem~\ref{main-theorem-red} in Section~\ref{sect:intro}, the assumption that $X\cap H$ is non-reduced means that $m_j\geq 2$ for some $1\leq j\leq m$.
\begin{enumerate}[label=(\Roman*)]
\setcounter{enumi}{1}
    \item\label{proper_linear}
    $H$ intersects $X$ properly and passes through the intersection of two distinct linear components $X_i\cap X_j$ where $1\leq i < j\leq t$. In this case, $(X\cap H)_{\overline{\bF_q}}$ has $X_i\cap X_j$ as an irreducible component along which $(X\cap H)_{\overline{\bF_q}}$ is non-reduced. More explicitly, this means that there exists a factor $G_\alpha$ with $\deg(G_\alpha) = 1$ and $m_\alpha\geq 2$ in \eqref{eqn:factor_FH} such that $\{G_\alpha = 0\} = X_i\cap X_j$ as hyperplanes in $H\cong\bP^{n-1}$.
    \item\label{proper_nonlinear}
    $H$ intersects $X$ properly and does not pass through the intersection of any two linear components of $X$. Hence, if $Y$ is a component of $(X\cap H)_{\overline{\bF_q}}$ along which $(X\cap H)_{\overline{\bF_q}}$ is non-reduced, then $Y\subset X_i$ for some $i > t$. Note that $Y = \{G_\alpha = 0\}$ where $G_\alpha$ is a factor in \eqref{eqn:factor_FH} with $m_\alpha\geq 2$.
\end{enumerate}

\begin{lemma}
\label{lemma:bound_I-and-II}
The number of hyperplanes of type~\ref{dim=n-1} or \ref{proper_linear} is bounded by
$$
    \binom{t}{2}\cdot(q+1) + 1
    = \frac{1}{2}t(t+1)(q+1) + 1.
$$
\end{lemma}

\begin{proof}
Suppose that $t\leq 1$, that is, $X$ contains at most one linear component. Then there is at most one hyperplane of type~\ref{dim=n-1} and no hyperplane of type~\ref{proper_linear}. Hence the number of hyperplanes under consideration is bounded by $1$.

Suppose that $X$ contains at least two linear components. Then a hyperplane $H$ of type~\ref{dim=n-1} or \ref{proper_linear} must contain $X_i\cap X_j$ for some $1\leq i<j\leq t$. Note that there are at most $q+1$ many $\bF_q$-hyperplanes pass through $X_i\cap X_j$ due to, for example, the fact that the intersection of more than $q+1$ many $\bF_q$-hyperplanes in $\bP^n$ has codimension at least $3$. Therefore, the bound in this case is given by 
$
    \binom{t}{2}\cdot(q+1).
$

Adding the two bounds in the above two cases together gives the desired bound.
\end{proof}

Given a hyperplane $H$ of type~\ref{proper_nonlinear}, we define $\mathcal{A}_H$ to be the collection of subschemes $Y\subset X$ which can be written as $Y = \{G_\alpha = 0\}$ for some $G_\alpha$ with $m_\alpha\geq 2$ in \eqref{eqn:factor_FH}. Given any $Y\in\mathcal{A}_H$ and any $P\in Y(\overline{\bF_q})$, either $T_PX = H$ or $P$ is a singular point of $X$. It then follows from Proposition~\ref{prop:singularOrGroundTangent} that
\begin{equation}
\label{eqn:Y_in_Z}
    Y\subset Z_X = X\cap\bigcap_{0\leq i<j\leq n}D_{ij}.
\end{equation}
Let us further take the union
$$
    \mathcal{B}\colonequals
    \bigcup_{
        H\text{ satisfies \ref{proper_nonlinear}}
    }
    \mathcal{A}_H.
$$

\begin{lemma}
\label{lemma:cardinality_B}
The cardinality of $\mathcal{B}$ is bounded by
$
   (d-t)(d-1)(q+1).
$
Therefore, the number of hyperplanes of type~\ref{proper_nonlinear} is bounded by
$
   (d-t)(d-1)(q+1)^2.
$
\end{lemma}

\begin{proof}
Let $Y\in\mathcal{B}$. Then $Y\subset X_k$ for some $k\geq t+1$. Because $\deg(X_k)\geq 2$, there exists $D_{ij}$ such that $\dim(X_k\cap D_{ij}) = n-2$ by Lemma~\ref{lemma:not_in_D}. Hence $Y$ is the underlying reduced subscheme of an irreducible component of $(X_k\cap D_{ij})_{\overline{\bF_q}}$ due to \eqref{eqn:Y_in_Z}. By B\'ezout's theorem, the number of irreducible components of $(X_k\cap D_{ij})_{\overline{\bF_q}}$ is bounded by
$$
    \deg(X_k\cap D_{ij})
    = \deg(X_k)\deg(D_{ij}) 
    = d_k (d-1) (q+1).
$$
Hence the cardinality of $\mathcal{B}$ is bounded by
$$
   \sum_{k=t+1}^{\ell} d_k (d-1)(q+1)
   = (d-t)(d-1)(q+1).
$$
This proves the first statement. The second statement holds since there are at most $q+1$ many $\bF_q$-hyperplanes passing through each $Y\in\mathcal{B}$.
\end{proof}

\begin{prop}
\label{prop:nonreduced-hyperplane}
Let $X\subset \bP^n$ be a reduced hypersurface of degree $d$ over $\bF_q$. Then the number of $\bF_q$-hyperplanes $H\subset\bP^n$ such that $H$ does not intersect $X$ properly or $X\cap H$ is non-reduced is bounded by the number
$$
    (d-t)(d-1)(q+1)^2
    + \frac{1}{2}t(t-1)(q+1)
    + 1
    \;\leq\; d(d-1)(q+1)^2 + 1.
$$
\end{prop}

\begin{proof}
A hyperplane $H$ as in the statement is of type~\ref{dim=n-1}, \ref{proper_linear}, or \ref{proper_nonlinear}, so the bound on the left hand side is obtained by summing up the bounds in Lemmas~\ref{lemma:bound_I-and-II} and \ref{lemma:cardinality_B}. The inequality can be verified directly so we leave it to the reader.
\end{proof}

\begin{proof}[Proof of Theorem~\ref{thm:reduced-hyperplane}]
By Proposition~\ref{prop:nonreduced-hyperplane}, we will get a desirable hyperplane if 
\begin{equation}
\label{eqn:desiredIneq}
    \sum_{j=0}^n q^{j}
    > (d-t)(d-1)(q+1)^2
    + \frac{1}{2}t(t-1)(q+1)+1.
\end{equation}
We can cancel the constant $1$ on the right side by starting the sum on the left with $j=1$. In fact, we will ensure that a stronger inequality holds:
\begin{equation}
\label{eqn:goodHyperplaneExists}
    \sum_{j=1}^n q^{j}
    > \left(
        (d-t)(d-1)
        + \frac{1}{2}t(t-1)
    \right)(q+1)^2.
\end{equation}
We want to maximize the quantity
\[
    \phi(t)\colonequals
    (d-t)(d-1) + \frac{1}{2}t(t-1)
\]
as a function of $t$ on the interval $[0,d]$. Note that $\phi(t)$ is a quadratic polynomial in $t$ with the leading term $(1/2)t^2$. As the graph of $\phi(t)$ is the usual upward-facing parabola, the maximum is attained at the end point $t=0$ or $t=d$. Since $\phi(0)=d(d-1)$ and $\phi(d)=\frac{1}{2}d(d-1)$, we conclude that $\phi(t)\leq d(d-1)$.

Straightforward computations show that the inequality 
\begin{equation}
\label{eqn:q_GEQ_d}
    q^{n-3}(q-1)\geq d(d-1)
\end{equation}
holds in the following cases:
\begin{itemize}
    \item $n=3$ and $q\geq d(d-1)+1$,
    \item $n\geq 4$ and $q\geq d$.
\end{itemize}
By hypothesis, we have $n\geq 3$, which implies that 
\[
    \sum_{j=1}^n q^{j}
    > q^{n-3}(q^3+q^2-q-1)
    = q^{n-3}(q-1)(q+1)^2.
\]
Combining this with \eqref{eqn:q_GEQ_d}, we obtain
$$
    \sum_{j=1}^n q^{j}
    >d(d-1)(q+1)^2
    \geq \phi(t) (q+1)^2
    = \left(
        (d-t)(d-1) + \frac{1}{2}t(t-1)
    \right)(q+1)^2.
$$
which is exactly \eqref{eqn:goodHyperplaneExists}, as desired.
\end{proof}
\begin{rmk}
Note that inequality~\eqref{eqn:desiredIneq} fails when $n=2$ and hence necessitates a different approach in Section~\ref{subsect:reducedPlaneCurve}.
\end{rmk}

While our main Theorem~\ref{thm:reduced-hyperplane} is only stated for reduced hyperplane sections for the sake of simplicity, it easily extends by induction to the following more general result.

\begin{cor}
\label{cor:reduced-r-plane}
Let $X\subset \bP^n$ be a reduced hypersurface of degree $d\geq 2$ over $\bF_q$ with $n\geq 3$. Then, for every $2\leq r\leq n-1$, there exists an $r$-plane $T\subset\bP^n$ over $\bF_q$ such that $X\cap T$ is reduced and has the expected dimension $r-1$ provided that
$$
    q\geq d(d-1)+1.
$$
\end{cor}

The proof of the corollary is left as an easy exercise to the reader.

\subsection{Transverse lines to reduced plane curves}
\label{subsect:reducedPlaneCurve}

We prove Theorem~\ref{thm:tranLine_reducedPlaneCurve} in this section.

\begin{lemma}
\label{lemma:bertini-irreducible-curve}
Suppose that $C\subset\bP^2$ is an integral curve over $\overline{\bF_q}$ of degree $d\geq 2$. Then the number of $\bF_q$-lines not transverse to $C$ is bounded by
\[
    \frac{1}{2}(d-1)(3d-2)(q+1).
\]
\end{lemma}

\begin{proof}
An $\bF_q$-line $L$ is not transverse to $C$ either if it passes through a singular point of $C$ or if $L = T_PC$ for some $P\in C$. Since $C$ is irreducible, one can derive from, for example, \cite{Liu02}*{\S7.5, Proposition~5.4}, that the number of singular points of $C$ is at most
\[
    \frac{1}{2}(d-1)(d-2).
\]
Because each singular point has at most $q+1$ distinct $\bF_q$-lines passing through it, this accounts for
\[
    \frac{1}{2}(d-1)(d-2)(q+1)
\]
many non-transverse $\bF_q$-lines.

To estimate the number of the second type of non-transverse lines, first note that the condition $L = T_PC$ implies that $T_PC$ is defined over $\bF_q$, thus $P\in Z_C$ by Proposition~\ref{prop:singularOrGroundTangent}. As $C$ is integral, it intersects some $D_{ij}$ in $0$-dimensional scheme by Lemma~\ref{lemma:not_in_D}. Thus, the number of $\bF_q$-lines that arise as $T_PC$ is at most
\[
    C\cdot D_{ij} = \deg(C)\deg(D_{ij})=d(d-1)(q+1).
\]
Consequently, the number of non-transverse $\bF_q$-lines to $C$ is at most
$$
    \frac{1}{2}(d-1)(d-2)(q+1) + d(d-1)(q+1)
    = \frac{1}{2}(d-1)(3d-2)(q+1)
$$
as claimed.
\end{proof}

\begin{lemma}
\label{lemma:nonTransverseLines_reducedCurve}
Let $C\subset \bP^2$ be a reduced curve of degree $d\geq 2$ over $\bF_q$. Then the number of $\bF_q$-lines not transverse to $C$ is bounded by
$$
    \frac{3}{2}d(d-1)(q+1).
$$
\end{lemma}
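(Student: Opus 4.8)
The plan is to reduce the case of a general reduced curve to the geometrically irreducible case already handled in Lemma~\ref{lemma:bertini-irreducible-curve}, and then bound the total by summing over components while carefully controlling the lines arising from intersections among distinct components. Write $C = C_1 \cup \dots \cup C_m$ for the decomposition into geometrically irreducible components (over $\overline{\bF_q}$), with $\deg(C_i) = d_i$, so that $\sum_{i} d_i = d$. An $\bF_q$-line $L$ fails to be transverse to $C$ precisely when it passes through a singular point of $C$ or is tangent to $C$ at some geometric point. The singular points of $C$ fall into two classes: those that are singular points of an individual component $C_i$, and those that arise as intersection points $C_i \cap C_j$ of distinct components.

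First I would treat the lines that are non-transverse \emph{because of a single component}. For each geometrically irreducible component $C_i$ of degree $d_i \geq 2$, Lemma~\ref{lemma:bertini-irreducible-curve} already bounds the number of $\bF_q$-lines not transverse to $C_i$ by $\tfrac{1}{2}(d_i-1)(3d_i-2)(q+1)$. For components of degree $d_i = 1$ (lines defined over $\overline{\bF_q}$), the only non-transverse lines are those passing through a point where the component meets the rest of $C$, so these contribute nothing to this first tally beyond the intersection count handled next. Summing the irreducible bound over all components with $d_i \geq 2$ gives a contribution of order $\tfrac{1}{2}\sum_i (d_i-1)(3d_i-2)(q+1)$.

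Next I would account for the \emph{extra singular points coming from pairwise intersections} $C_i \cap C_j$ with $i \neq j$. By B\'ezout's theorem, $|C_i \cap C_j| \leq d_i d_j$, and each such point lies on at most $q+1$ distinct $\bF_q$-lines. Thus the intersection points contribute at most $\bigl(\sum_{i<j} d_i d_j\bigr)(q+1)$ non-transverse lines. Combining the two tallies, the total number of non-transverse $\bF_q$-lines is at most
\[
    (q+1)\left(
        \frac{1}{2}\sum_{i} (d_i-1)(3d_i-2)
        + \sum_{i<j} d_i d_j
    \right),
\]
and the main work becomes the purely algebraic verification that this sum is bounded by $\tfrac{3}{2}d(d-1)$, using $\sum_i d_i = d$.

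The main obstacle I anticipate is precisely this final combinatorial estimate: one must show that the contribution of the cross-terms $\sum_{i<j} d_i d_j$ together with the per-component contributions never exceeds the target. The key identity to exploit is $\bigl(\sum_i d_i\bigr)^2 = \sum_i d_i^2 + 2\sum_{i<j} d_i d_j$, which converts the cross-terms into $\sum_{i<j} d_i d_j = \tfrac{1}{2}(d^2 - \sum_i d_i^2)$. Substituting and expanding, I would verify that
\[
    \frac{1}{2}\sum_{i} (d_i-1)(3d_i-2) + \frac{1}{2}\Bigl(d^2 - \sum_i d_i^2\Bigr)
    \leq \frac{3}{2}d(d-1),
\]
which after simplification reduces to an inequality comparing $\sum_i d_i^2$ and $\sum_i d_i = d$ against $d^2$; since $\sum_i d_i^2 \leq d^2$ and each $d_i \geq 1$, this should close with room to spare. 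Care is needed to ensure the degree-one components and any shared singular points are not double-counted, but since both the per-component bound and the intersection bound are upper bounds, over-counting only strengthens the inequality.
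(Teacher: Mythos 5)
Your proposal is correct and follows essentially the same route as the paper's proof: decompose $C$ into geometrically irreducible components, apply Lemma~\ref{lemma:bertini-irreducible-curve} to each component of degree at least $2$, bound the lines through pairwise intersection points via B\'ezout, and close with the identity $\bigl(\sum_i d_i\bigr)^2 = \sum_i d_i^2 + 2\sum_{i<j} d_i d_j$ together with $\sum_i d_i^2 \leq d^2$ and the bound on the number of components. The final algebraic verification you outline does indeed reduce to $\sum_i d_i^2 + \ell \leq d^2 + d$, exactly as in the paper.
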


\begin{proof}
Write
$
    C_{\overline{\bF_q}} = \cup_{i=1}^\ell C_i
$
where each $C_i$ is irreducible and let $d_i\colonequals\deg(C_i)$.
For each $\bF_q$-line $L$ not transverse to $C$, we have that
\begin{enumerate}[label=(\roman*)]
    \item\label{non-transverse_component}
    $L$ meets $C_i$ non-transversely for some $i$ where $\deg(C_i)\geq 2$, or
    \item\label{non-transverse_intersection}
    $L$ passes through an intersection point of $C_i$ and $C_j$ for some $i\neq j$. Note that this includes $L$ which meets $C_i$ non-transversely where $\deg(C_i) = 1$.
\end{enumerate}
By applying Lemma~\ref{lemma:bertini-irreducible-curve} to each component $C_i$ and summing up all the upper bounds, we conclude that the number of lines in \ref{non-transverse_component} is at most
$$
    \frac{1}{2}\sum_{i=1}^{\ell}(d_i-1)(3d_i-2)(q+1).
$$
On the other hand, the number of points in $C_i\cap C_j$ for $i\neq j$ is at most $d_id_j$ by B\'ezout's theorem. Since there are at most $(q+1)$ lines defined over $\bF_q$ that pass through a point in $C_i\cap C_j$, the number of lines in \ref{non-transverse_intersection} is at most
$$
    \sum_{i<j} d_id_j(q+1).
$$

By adding up all the contributions above, we obtain that the number of $\bF_q$-lines not transverse to $C$ is at most
\[
    (q+1)\left(\frac{1}{2}\sum_{i=1}^{\ell}(d_i-1)(3d_i-2)
    + \sum_{i<j} d_id_j\right)
\]
As a factor of the above, we have 
\begin{align*}
    \frac{1}{2}\sum_{i=1}^{\ell}(d_i-1)(3d_i-2)
    + \sum_{i<j} d_id_j
    &= \sum_{i=1}^{\ell} \left(\frac{3}{2} d_i^2 - \frac{5}{2} d_i + 1\right) + \sum_{i<j} d_i d_j \\
    &= \sum_{i=1}^{\ell} \left(d_i^2 - \frac{5}{2} d_i + 1\right) + \frac{1}{2}\left(\sum_{i=1}^{\ell} d_i^2 + 2\sum_{i<j} d_i d_j\right) \\
    &= \sum_{i=1}^{\ell} \left(d_i^2 - \frac{5}{2} d_i + 1\right) + \frac{1}{2}\left(\sum_{i=1}^{\ell} d_i\right)^2 \\
    &= \left(\sum_{i=1}^{\ell} d_i^2  \right) - \frac{5}{2}d + \ell+ \frac{1}{2}d^2 
\end{align*}
where the last equality uses $d=\sum_{i=1}^{\ell} d_i$.
Using the facts that
$$
    \sum_{i=1}^{\ell}d_i^2 \leq \left(\sum_{i=1}^{\ell}d_i\right)^2 = d^2
    \quad\text{and}\quad
    \ell\leq d,
$$
we conclude that the number of $\bF_q$-lines not transverse to $C$ is at most
$$
    \left(d^2-\frac{5}{2}d+d+\frac{1}{2}d^2\right)(q+1)
    = \left(\frac{3}{2}d^2-\frac{3}{2}d\right)(q+1)
    = \frac{3}{2}d(d-1)(q+1)
$$
as desired.
\end{proof}

\begin{proof}[Proof of Theorem~ \ref{thm:tranLine_reducedPlaneCurve}]
The number of $\bF_q$-lines in $\bP^2$ is $q^2+q+1$, so, by Lemma~\ref{lemma:nonTransverseLines_reducedCurve}, there exists a transverse $\bF_q$-line if
$$
    q^2+q+1
    >\frac{3}{2}d(d-1)(q+1).
$$
This inequality holds under the hypothesis $q\geq \frac{3}{2}d(d-1)$. Indeed, we have
$$
    q^2+q+1 > q^2+q=q(q+1) \geq \frac{3}{2}d(d-1)(q+1)
$$
which completes the proof.
\end{proof}

\begin{rmk}
There is a different method \cite{AG}*{Proposition 2.2} to prove Theorem~\ref{thm:tranLine_reducedPlaneCurve} at the cost of slightly stronger hypothesis $q\geq 2d(d-1)$.
\end{rmk}

Our final result in the present paper concerns existence of transverse $\bF_q$-lines to reduced hypersurfaces of arbitrary dimension. We obtain it by reducing the statement to the case of plane curves (Theorem~\ref{thm:tranLine_reducedPlaneCurve}) with the help of Corollary~\ref{cor:reduced-r-plane}.

\begin{cor}
\label{cor:tranLine_reducedHypersurface}
Let $X\subset \bP^n$ be a reduced hypersurface of degree $d$ defined over $\bF_q$. Suppose that
\[
    q\geq\frac{3}{2}d(d-1).
\]
Then there exists an $\bF_q$-line $L\subset\bP^n$ which is transverse to $X$.
\end{cor}

\begin{proof}
When $d=1$, $X$ is a hyperplane and any $\mathbb{F}_q$-line $L\subset \mathbb{P}^n$ with $L\not \subset H$ satisfies the conclusion. For $d\geq 2$, it is straightforward to see that the inequality $q\geq \frac{3}{2}d(d-1)$ implies $q\geq d(d-1)+1$. By Corollary~\ref{cor:reduced-r-plane}, there exists a plane $H\cong\bP^2$ in $\bP^n$ over $\bF_q$ such that $X_{1}\colonequals X\cap H$ is a reduced plane curve. Now we apply Theorem~\ref{thm:tranLine_reducedPlaneCurve} to find an $\bF_q$-line  $L\subset\bP^2$ such that $X_{1}\cap L$ consists of $d$ distinct points. This line $L$ also satisfies the condition that $\#(X\cap L)=d$ distinct points, and  so $L$ is a desired transverse line to $X$. 
\end{proof}

\bibliographystyle{alpha}
\bibliography{Transversality}

\ContactInfo

\end{document}